\newlength{\originalbase}
\newcommand{\spacing}[1]{\setlength{\baselineskip}{#1\originalbase}}
\begin{document}
\spacing{1.5}

\newtheorem{theorem}{Theorem}[section]
\newtheorem{claim}[]{Claim}
\newtheorem{prop}[theorem]{Proposition}
\newtheorem{lemma}[theorem]{Lemma}
\newtheorem{corollary}[theorem]{Corollary}
\newtheorem{conjecture}[theorem]{Conjecture}
\newtheorem{defn}[theorem]{Definition}
\newtheorem{example}[theorem]{Example}

\title{On the number of 5-cycles in a tournament}

\author{Natasha Komarov\thanks{Dept.\ of Math, CS, and Stats, St.\ Lawrence University, Canton NY 13617, USA; nkomarov@stlawu.edu.}
\, and John Mackey\thanks{Dept.\ of Math., Carnegie Mellon University, Pittsburgh PA 15213, USA; jmackey@andrew.cmu.edu.}}

\maketitle

\begin{abstract}
We find a formula for the number of directed 5-cycles in a tournament in terms of its edge scores and use the formula to find upper and lower bounds on the number of 5-cycles in any $n$-tournament. In particular, we show that the maximum number of 5-cycles is asymptotically equal to $\frac{3}{4}{n \choose 5}$, the expected number 5-cycles in a random tournament ($p=\frac{1}{2}$), with equality (up to order of magnitude) for almost all tournaments. 
\end{abstract}

\section{Introduction}
\label{intro section}

The work of Beineke and Harary~\cite{BeinekeHarary} bounds the number of strong $k$-subtournaments in any $n$-tournament for $k=3,4,5$, and consequently the number of $k$-cycles for $k=3,4$. 
 David Berman~\cite{Berman,BermanThesis}, maximized the number of $5$-cycles in a narrow family of tournaments (specifically, those that are ``semi-transitive''). More recently, Savchenko~\cite{Savchenko} has established bounds on 5-cycles and 6-cycles in regular tournaments.
 Computing the number of 5-cycles in a general $n$-tournament has remained elusive, however.
We find an exact formula for the number of 5-cycles in an $n$-tournament in terms of its edge scores
and use this result to derive upper and lower bounds on the number of 5-cycles.

It is interesting to note that for $k=4$, the maximum number of $k$-cycles is greater than the expected number in a random tournament with edge probability $p=\frac{1}{2}$ (by a factor of $\frac{4}{3}$) whereas for $k=3$ and $k=5$ these values are asymptotically equal.

\subsection{Context and motivation}


In extremal combinatorics we are frequently interested in determining whether the largest or smallest possible number of copies of a given object
in a graph or tournament is asymptotically the same as the expected number of copies of it in a random graph or tournament.

Perhaps the first result in this direction was Goodman's Theorem (initially stated and proven by Goodman~\cite{Goodman}, with the proof later improved upon by Lorden~\cite{Lorden}), which
states that the number of complete 3-vertex subgraphs plus the number of 3-vertex independent sets in an $n$-vertex graph is at least $n(n-1)(n-5)/24$,
whereas the expected number of such objects in a random graph (with edge density $p=\frac{1}{2}$) on $n$ vertices is $n(n-1)(n-2)/24$.

This led to the conjecture of Burr and Rosta~\cite{BurrRostaConjecture} (extending a conjecture of Erdos~\cite{ErdosConjecture}) that the sum of the number of complete $k$-vertex subgraphs and the number of $k$-vertex independent sets is minimized at about ${n \choose k} 2^{1- {k \choose 2}} $, which is the expected number of such occurences in a random $\left(p=\frac{1}{2}\right)$ $n$-vertex graph. Thomason~\cite{Thomason} disproved this conjecture for all $k \geq 4$, but other positive results similar to Goodman's exist (e.g.\ \cite{ApproxOfSidorenko,MultOfSGs}).

In the setting of tournaments, it was shown by Moon~\cite{MoonThm}, that the number of acyclic subtournaments on $k$ vertices in an $n$-vertex tournament is at least

$$\frac{1}{2^{k \choose 2}}\prod_{j=0}^{k-1}(n - 2^{j} + 1),$$

which is asymptotically the same as the expected number of such occurrences in a random $n$-vertex tournament.

For $k{=}3$ the result above was initially discovered by Kendall and Babington Smith~\cite{KendallSmith} using the method of paired comparisons in the context of maximizing the number of 3-cycles in an $n$-vertex tournament. We see that an $n$-vertex tournament will contain no more than $\frac{1}{24} n(n+1)(n-1)$ 3-cycles when $n$ is odd and $\frac{1}{24}n(n+2)(n-2)$ 3-cycles when $n$ is even (with equality holding if and only if the tournament is regular; see, e.g.\, \cite{BeinekeHarary, ReidBeineke, KendallSmith, MoonApp, Moon3cycles}), which is approximately the number of 3-cycles that one expects in a random $n$-vertex tournament.

For $k{=}4$, the work of Beineke and Harary~\cite{BeinekeHarary} shows that there can be no more than $\frac{1}{48} n(n+1)(n-1)(n-3)$ 4-cycles in a tournament on $n$ vertices if $n$ is odd and no more than $\frac{1}{48}n(n+2)(n-2)(n-3)$ if $n$ is even, and moreover, that this number can be achieved by a particular family of tournaments. (See also the work of K.\ B.\ Reid on this topic~\cite{KBReid-1989}.)

One might expect, just as in the case of Thomason's disproof of Erdos' conjecture, that the maximum number of $k$-cycles in an $n$-vertex tournament would be asymptotically larger than the expected number of $k$-cycles in a random $n$-vertex tournament for all $k \geq 4$.
As a result of our work, however, we see that the maximum number of 5-cycles in an $n$-vertex tournament is asymptotically the same as the expected number of 5-cycles in a random $n$-vertex tournament.

\section{The number of 5-cycles in a tournament}

The expected number of (directed) 5-cycles in an $n$-vertex tournament is given by
$ \frac{3}{4}{n \choose 5} $. Let $c(T,k)$ be the number of $k$-cycles in a tournament $T$. We will find $c(T,5)$ for any tournament $T$ in terms of its edge scores, and show that the maximum number of 5-cycles in a tournament is always (asymptotically) at most the expected number.

\subsection{The number of 5-cycles in a tournament}

The {\bf edge scores} of a tournament $T = (V,E)$ are the ordered 4-tuples $ (A(u,v),B(u,v),C(u,v),D(u,v))$ where we define 
 
\begin{itemize}
	\item $A(u,v) = |\{w \in V\backslash\{u,v\}\} | (u,w) \in E \mbox{ and } (v,w) \in E \} |$
	 (i.e.\ the number of vertices that both $u$ and $v$ have as out-neighbors)
	\item $B(u,v) = |\{w \in V\backslash\{u,v\}\} | (w, u) \in E \mbox{ and } (w, v) \in E \} |$
		
	(i.e.\ the number of vertices that both $u$ and $v$ have as in-neighbors)
	\item $C(u,v) = |\{w \in V\backslash\{u,v\}\} | (u,w) \in E \mbox{ and } (w, v) \in E \} |$
	
	(i.e.\ the number of vertices that are out-neighbors of $u$ and in-neighbors of $v$)
	\item $D(u,v) = |\{w \in V\backslash\{u,v\}\} | (w,u) \in E \mbox{ and } (v, w) \in E \} |$

	(i.e.\ the number of vertices that form a directed 3-cycle with $u$ and $v$)
\end{itemize}

		\begin{figure}
		
	\begin{center}
		\begin{tabular}{cccc}

		\begin{minipage}{.17\textwidth}
			\includegraphics[scale=.25]{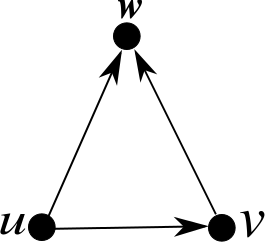}
		\end{minipage}
			
			&
			
		\begin{minipage}{.17\textwidth}
			\includegraphics[scale=.25]{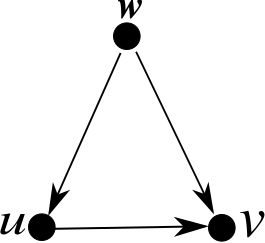}
		\end{minipage}
		
			&
			
		\begin{minipage}{.17\textwidth}
			\includegraphics[scale=.25]{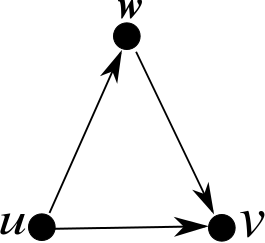}
		\end{minipage}
		
			&
			
		\begin{minipage}{.17\textwidth}
			\includegraphics[scale=.25]{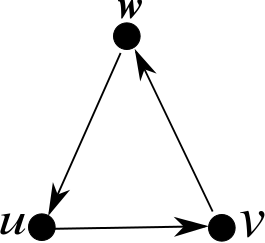}
		\end{minipage} 
		
		\end{tabular}
	\end{center}	
		
	\captionsetup{width=0.6\textwidth}

		\caption{Visual representations of the vertices counted by 
				(from~left~to~right)~$A(u,v), B(u,v), C(u,v), D(u,v)$}
		\end{figure}

	When there is no possibility of confusion, we will shorten these to simply $A, B, C,$ and $D$. 
	
Note that for any edge $(u,v) \in E$,
\begin{eqnarray}
	\label{odu}
	od(u) &=& 1 + A(u,v) + C(u,v)\\
	\label{idu}
	id(u) &=& B(u,v) + D(u,v)\\
	\label{odv}
	od(v) &=& A(u,v) + D(u,v)\\
	\label{idv}
	id(v) &=& 1 + B(u,v) + C(u,v)\\
	\label{sums to n-2}
	n{-}2&=& A(u,v) + B(u,v) + C(u,v) + D(u,v)
\end{eqnarray}

\begin{theorem}
\label{exact count}
The number of 5-cycles in an $n$-tournament $T=(V,E)$ with edge scores $(A(u,v),B(u,v),C(u,v),D(u,v))_{(u,v) \in E}$ is given by
\begin{equation*}
c(T,5) = \frac{3}{4}{n \choose 5} - \frac{1}{8}\sum_{(u,v)\in E} [(C{+}D)(A{-}B)^2 + (A{+}B)(C{-}D)^2] + \frac{1}{4}\sum_{(u,v)\in E} (A{+}B)(C{+}D).
\end{equation*}
where, for notational convenience, $A {=} A(u,v), B {=} B(u,v), C{=}C(u,v)$, and $D {=} D(u,v)$.
\end{theorem}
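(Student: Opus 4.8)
The plan is to pass through the adjacency matrix. Let $M$ be the $0$–$1$ matrix of $T$, so $M_{uv}=1$ iff $(u,v)\in E$. Because $T$ has no $2$-cycle, any two coinciding vertices on a closed walk of length $5$ would lie at cyclic distance $1$ or $2$ and force a $1$- or $2$-cycle; hence every closed $5$-walk runs once around a directed $5$-cycle, and each $5$-cycle yields exactly $5$ of them. Thus $5\,c(T,5)=\operatorname{tr}(M^{5})$. Equivalently, singling out an arc $(u,v)$ of a $5$-cycle together with its opposite vertex $d$ (the vertex of the cycle not adjacent in the cycle to $u$ or to $v$), one gets $5\,c(T,5)=\sum_{(u,v)\in E}\sum_{d}P(v,d)\,P(d,u)$ where $P(x,y)=|\{w:(x,w),(w,y)\in E\}|$. (The same argument gives the cleaner $c(T,4)=\tfrac12\sum_{(u,v)\in E}C(u,v)D(u,v)$ as a warm-up, since closed $4$-walks are exactly $4$-cycles.)

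Next I would write $M=\tfrac12(L+K)$ with $L=J-I$ symmetric and $K=M-M^{\top}$ skew-symmetric, so that $32\operatorname{tr}(M^{5})=\operatorname{tr}((L+K)^{5})$. Expanding, every summand is the trace of a length-$5$ word in $L$ and $K$; transposing and using cyclicity of the trace — together with the fact that a binary word of length $5$ is always a cyclic rotation of its own reverse — kills every such trace having an odd number of $K$'s. Only $0$, $2$, or $4$ copies of $K$ survive, and collecting cyclically equivalent words gives
\begin{equation*}
32\operatorname{tr}(M^{5})=\operatorname{tr}(L^{5})+5\operatorname{tr}(K^{2}L^{3})+5\operatorname{tr}(KLKL^{2})+5\operatorname{tr}(K^{4}L).
\end{equation*}
I would then evaluate these four traces. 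The eigenvalues of $L$ are $n-1$ (once) and $-1$ ($n-1$ times), and $L^{2}=(n-2)J+I$; also $K\mathbf 1=\vec\sigma$, the vector of imbalances $\sigma(w)=\mathrm{od}(w)-\mathrm{id}(w)$, with $\mathbf 1^{\top}\vec\sigma=0$; and, crucially, $(K^{2})_{ww}=-(n-1)$ while $(K^{2})_{uv}=(C{+}D)-(A{+}B)$ for any arc $(u,v)$, because $K^{2}=M^{2}-MM^{\top}-M^{\top}M+(M^{\top})^{2}$ and these four matrices have $(u,v)$-entries $C,A,B,D$ respectively. With these inputs each trace reduces to an explicit function of $n$, of $\|\vec\sigma\|^{2}=\sum_{w}\sigma(w)^{2}$, of $\sum_{(u,v)\in E}\bigl((C{+}D)-(A{+}B)\bigr)^{2}$, and of $\mathbf 1^{\top}K^{4}\mathbf 1=\|K\vec\sigma\|^{2}=(n-1)\|\vec\sigma\|^{2}-2\sum_{(u,v)\in E}\bigl((C{+}D)-(A{+}B)\bigr)\sigma(u)\sigma(v)$.

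The last step is to rewrite everything purely in the edge scores. The identities \eqref{odu}--\eqref{idv} give, for every arc $(u,v)$, $A-B=\tfrac12(\sigma(u)+\sigma(v))$ and $C-D=\tfrac12(\sigma(u)-\sigma(v))-1$, while $A+B+C+D=n-2$; hence $\sigma(u)\sigma(v)=(A-B)^{2}-\bigl((C-D)+1\bigr)^{2}$, and summing $\sigma(u)^{2}+\sigma(v)^{2}=2(A-B)^{2}+2((C-D)+1)^{2}$ over all arcs turns $(n-1)\|\vec\sigma\|^{2}$ into an edge-score sum as well. Substituting all of this into the displayed identity and simplifying, the terms depending on the vertex degrees and on the third per-edge parameter $C{+}D$ should reorganize into exactly $160$ times $-\tfrac18\sum_{(u,v)\in E}[(C{+}D)(A{-}B)^{2}+(A{+}B)(C{-}D)^{2}]+\tfrac14\sum_{(u,v)\in E}(A{+}B)(C{+}D)$, with the residual $T$-independent polynomial in $n$ collapsing to $160\cdot\tfrac34\binom n5=n(n-1)(n-2)(n-3)(n-4)$.

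The main obstacle is precisely this final polynomial identity. It is long but entirely mechanical; the delicate point is that every quantity summed over vertices — notably $\|\vec\sigma\|^{2}$ and $\|K\vec\sigma\|^{2}$ — must first be reexpressed as a sum over the edge set before the cancellations become visible (this is possible exactly because the target formula is written over $E$), and one must verify that the $n$-dependent constant lands on the expected count $\tfrac34\binom n5$. A purely combinatorial alternative avoids matrices entirely: writing $c(T,5)=\sum_{\text{cyclic }5\text{-patterns }\pi}\prod_{i=1}^{5}\bigl(1-[\,i\text{-th edge of }\pi\text{ is reversed in }T\,]\bigr)$ and expanding by inclusion–exclusion over the set $W$ of reversed edges, the term $W=\{1,\dots,5\}$ equals $-c(T,5)$ (pass to the reverse tournament), giving $2c(T,5)=\sum_{|W|\le 4}(\cdots)$; the surviving shape-counts are polynomials in $n$ when $W$ consists of pairwise non-adjacent edges and otherwise involve counts of directed $2$-paths and of pairs of arcs, which is where the edge scores enter.
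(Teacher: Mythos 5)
Your outline is correct, and it is a genuinely different route from the paper's. The paper proves the identity by brute-force enumeration: it lists the twelve tournaments on five vertices, writes fourteen counting quantities (the twelve cubic edge-score sums, $\binom{n}{5}$, and $c(T,5)$) as linear combinations of the induced-subtournament counts $A_1,\dots,A_{12}$, and exhibits the row relation $8R_{14}=-2\sum_{i\le 8}R_i+2\sum_{i=9}^{12}R_i+6R_{13}$; the formula drops out with no matrix algebra at all, but also with little explanation of why it exists. Your argument instead explains the phenomenon structurally: $\operatorname{tr}(M^5)=5c(T,5)$ because a tournament has no loops or $2$-cycles, and writing $M=\tfrac12\bigl((J-I)+K\bigr)$ with $K$ skew-symmetric, the trace of every length-$5$ word with an odd number of $K$'s dies (reversal is a cyclic rotation in length $5$), leaving exactly the four traces you display; your evaluations of them are correct, including $(K^2)_{uv}=(C{+}D)-(A{+}B)$, $(K^2)_{ww}=-(n-1)$, and $\mathbf{1}^{\top}K^4\mathbf{1}=\|K\vec\sigma\|^2$. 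What your route buys is conceptual: it reduces $c(T,5)$ to the invariants $n$, $\sum_w\sigma(w)^2$, $\sum_E\bigl((C{+}D)-(A{+}B)\bigr)^2$ and $\sum_E\bigl((C{+}D)-(A{+}B)\bigr)\sigma(u)\sigma(v)$, and it makes the parity mechanism (and the contrast with $k=4$ and $k=6$) visible; what the paper's route buys is a short, fully verified derivation with the answer already organized in the edge-score form. One caveat on your last step: the claimed simplification is asserted rather than carried out, and it is not a termwise cancellation over edges. After substituting $A-B=\tfrac12(\sigma(u)+\sigma(v))$ and $C-D=\tfrac12(\sigma(u)-\sigma(v))-1$, matching your expression to the stated formula still requires a few global exchanges between edge sums and vertex sums beyond the ones you list, most notably $\sum_{(u,v)\in E}\bigl((C{+}D)-(A{+}B)\bigr)\bigl(\sigma(u)-\sigma(v)\bigr)=0$ (equivalently, the diagonal of the skew-symmetric matrix $K^3$ vanishes, or $\sum_E\bigl((C{+}D)-(A{+}B)\bigr)=3c(T,3)-\#\{\text{transitive triangles}\}$ expressed through the scores), together with reductions such as $\sum_E(\sigma(u)+\sigma(v))^2=(n-2)\sum_w\sigma(w)^2$ and $\sum_E(\sigma(u)-\sigma(v))=\sum_w\sigma(w)^2$. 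With those in hand the bookkeeping does close up and the constant term lands on $\tfrac34\binom n5$, so the plan is sound; just do not present the final identity as a per-edge cancellation.
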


\begin{proof}

\begin{figure}
\includegraphics[scale=.7]{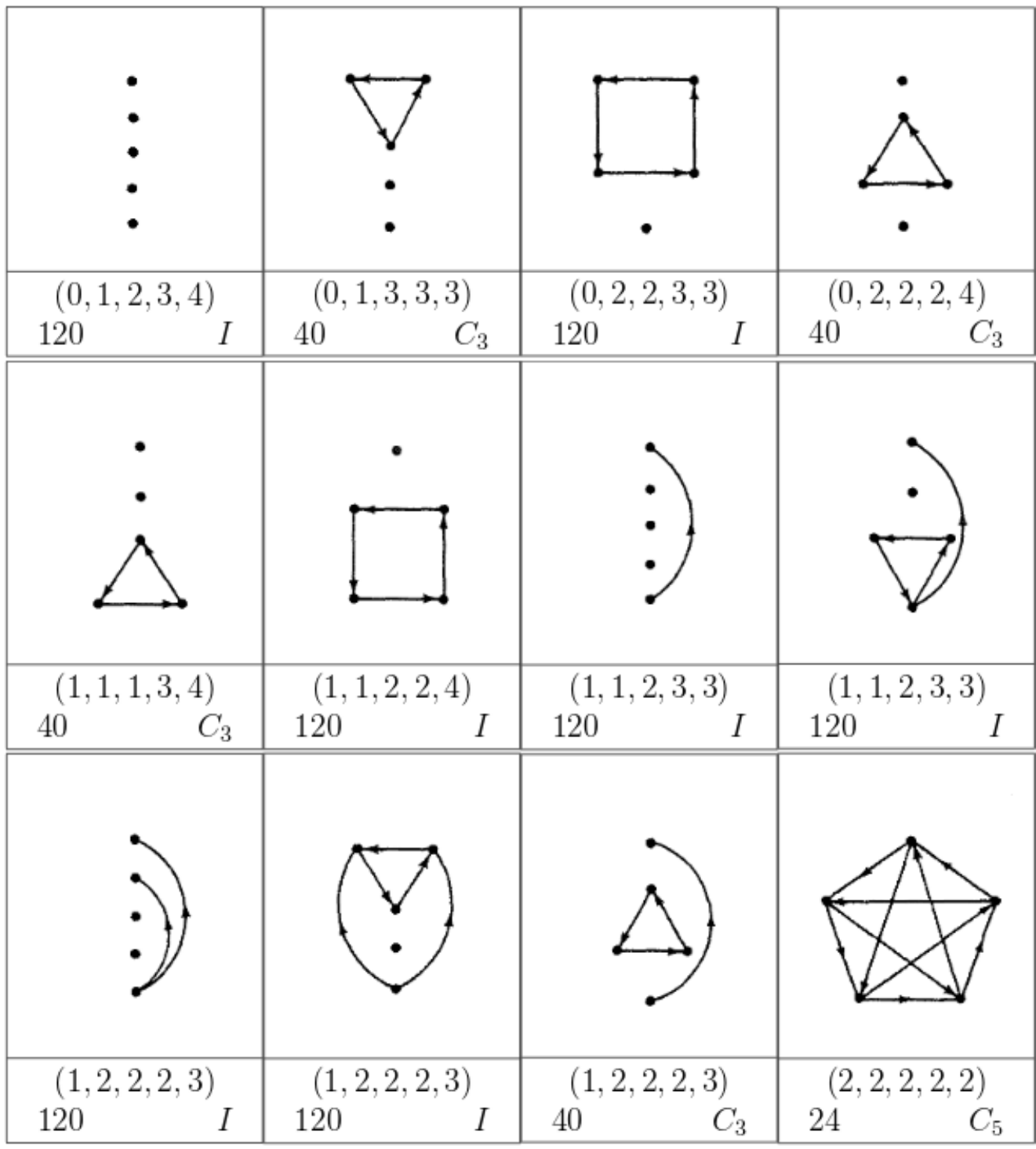}
\caption{The 12 non-isomorphic tournaments on 5 vertices; image taken from~\cite{MoonApp}.}
\label{noniso5tourns}
\end{figure}

There are twelve non-isomorphic tournaments on five vertices, displayed in Figure~\ref{noniso5tourns}. In the figure, whenever an arc is omitted between a pair of vertices, 
it goes from the higher vertex to the lower vertex, as in~\cite{MoonApp}. The number on the lower left in each box is the number of ways of
labeling that tournament's vertices and the symbol in the lower right in each box denotes that tournament's automorphism group.
We will call these tournaments $T_1$ through $T_{12}$ (in the order in which they are displayed).

Let $T=(V,E)$ be an arbitrary tournament on $n$ vertices.
Let $A_i(T)$ be the number of appearances of $T_i$ as an induced subtournament in $T$, for each $i \in [12]$.
We will write $A_i(T) = A_i$ when this will not result in any ambiguity. 
 Note that
\begin{equation}
\label{basic}
{n \choose 5} = \sum_{i=1}^{12} A_i
\end{equation}

and

\begin{equation}
\label{num5cycs}
c(T,5) = A_7 + A_8 + A_9 + 2A_{10}+3A_{11} + 2 A_{12}
\end{equation}
where the coefficients on the right hand side of Equation~\ref{num5cycs} are the number of copies of 5-cycles in the corresponding tournaments. The reader is encouraged to verify that tournaments $T_1$ through $T_6$ contain no directed 5-cycles, tournaments $T_7$ through $T_9$ each contain exactly one, $T_{10}$ and $T_{12}$ each contain exactly two, and $T_{11}$ contains exactly three.

As we did for ${n \choose 5}$ and $c(T,5)$ in Equations (\ref{basic} and \ref{num5cycs}), we will write linear relations for twelve quantities involving edge scores in terms of $A_1, A_2, \dots, A_{12}$. The 12 equations involving sums
of edge scores are verified in Section~\ref{section:appendix} using indicator functions.
We summarize these linear relations in the 14 by 12 matrix shown in Figure~\ref{matrix relations}, in which row $i$ of the matrix gives the coefficients of the $A_i$'s so that their sum yields the sum given in item $i$ in the following list:

\begin{enumerate}
\item $\displaystyle \sum_{(u,v) \in E} {A(u,v) \choose 2} C(u,v)$
\item $\displaystyle \sum_{(u,v) \in E} {A(u,v) \choose 2} D(u,v)$
\item $\displaystyle \sum_{(u,v) \in E} {B(u,v) \choose 2} C(u,v)$
\item $\displaystyle \sum_{(u,v) \in E} {B(u,v) \choose 2} D(u,v)$
\item $\displaystyle \sum_{(u,v) \in E} {C(u,v) \choose 2} A(u,v)$
\item $\displaystyle \sum_{(u,v) \in E} {C(u,v) \choose 2} B(u,v)$
\item $\displaystyle \sum_{(u,v) \in E} {D(u,v) \choose 2} A(u,v)$
\item $\displaystyle \sum_{(u,v) \in E} {D(u,v) \choose 2} B(u,v)$
\item $\displaystyle \sum_{(u,v) \in E} A(u,v) B(u,v) C(u,v)$
\item $\displaystyle \sum_{(u,v) \in E} A(u,v) B(u,v) D(u,v)$
\item $\displaystyle \sum_{(u,v) \in E} A(u,v) C(u,v) D(u,v)$
\item $\displaystyle \sum_{(u,v) \in E} B(u,v) C(u,v) D(u,v)$
\item $\displaystyle {n \choose 5}$
\item $\displaystyle c(T,5)$
\end{enumerate}

\begin{figure}[h]
\centering 

$\left[\begin{tabular}{cccccccccccc}
1 & 0 & 0 & 3 & 0& 2& 0& 0& 0& 0& 0& 0\\

0& 3& 1& 0& 0& 0& 1& 1& 0& 0& 0& 0\\

1& 0& 2& 3& 0& 0& 0& 0& 0& 0& 0& 0\\

0& 0& 0& 0& 3& 1& 1& 1& 0& 0& 0& 0\\

1& 0& 0& 0& 3& 2& 0& 0& 0& 0& 0& 0 \\

1& 3& 2& 0& 0& 0& 0& 0& 0& 0& 0& 0\\

0& 0& 1& 0& 0& 0& 0& 1& 1& 1& 0& 0\\

0& 0& 0& 0& 0& 1& 0& 1& 1& 1& 0& 0\\

1& 3& 0& 0& 3& 0& 1& 3& 0& 1& 0& 0\\

0& 0& 1& 3& 0& 1& 0& 0& 1& 2& 3& 5\\

0& 0& 2& 0& 0& 0& 1& 1& 1& 2& 3& 0\\

0& 0& 0& 0& 0& 2& 1& 1& 1& 2& 3& 0\\

1& 1& 1& 1& 1& 1& 1& 1& 1& 1& 1& 1\\

0& 0& 0& 0& 0& 0& 1& 1& 1& 2& 3& 2 
\end{tabular}\right]$

\caption{14 linear relations in $A_1$ through $A_{12}$}

\label{matrix relations}
\end{figure}

From this matrix, we arrive at the following conclusion: 
 $$8R_{14} =  6R_{13} - 2\sum_{i=1}^8 R_i + 2\sum_{i=9}^{12}R_i ,$$

where $R_i$ is the $i^{th}$ row of the matrix. This yields

$$8c(T,5) = 6{n \choose 5} 
- \sum_{(u,v) \in E} \left\{  2 \left({A \choose 2} C + {A \choose 2} D + {B \choose 2} C + {B \choose 2} D \right. \right.$$ 
$$\left. \left.+ {C \choose 2} A + {C \choose 2} B + {D \choose 2} A + {D \choose 2} B\right)
- 2(A B C + A B D + A C D + B C D)\right\}$$

which is

$$= 6{n \choose 5} 
- \sum_{(u,v) \in E} \left\{ (A^2 C + A^2 D + B^2 C + B^2 D - 2 A B C - 2 A B D) \right.$$ 
$$\left.+ (C^2 A + C^2 B + D^2 A + D^2 B - 2 A C D - 2 B C D)
- 2(A C + A D + B C + B D) \right\}$$

which is

$$= 6{n \choose 5} 
- \sum_{(u,v) \in E} \{ (C + D)(A^2 - 2 A B + B^2)$$ 
$$+ (A + B)(C^2 - 2 C D + D^2)
- 2(A C + A D + B C + B D)\}.$$

Upon factoring, this yields the following identity

\begin{equation*}
8c(T,5) = 6{n \choose 5} - \sum_{(u,v)\in E} [(C{+}D)(A{-}B)^2 + (A{+}B)(C{-}D)^2] + 2\sum_{(u,v)\in E} (A{+}B)(C{+}D),
\end{equation*}

as desired.
\end{proof}

The sum being subtracted is nonnegative and the sum being added is a lower-order term. Therefore, 

\begin{equation*}
c(T,5) \le \frac{3}{4} {n \choose 5}  + O(n^4)
\end{equation*}

with equality if and only if 
\begin{equation}
\label{equality condition}
 \sum_{(u,v)\in E} [(C{+}D)(A{-}B)^2 + (A{+}B)(C{-}D)^2] = O(n^4)
\end{equation}

Therefore, we have as corollaries to Theorem~\ref{exact count} the following bounds.

\begin{corollary}
\label{upper bound corollary}
For all $n$-tournaments $T$, 
$$c(T,5) \le \frac{3}{4} {n \choose 5} + \frac{1}{4}{n \choose 2}\left(\frac{n-2}{2} \right)^2. $$
\end{corollary}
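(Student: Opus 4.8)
The plan is to start from the exact identity in Theorem~\ref{exact count}, namely
\[
c(T,5) = \frac{3}{4}\binom{n}{5} - \frac{1}{8}\sum_{(u,v)\in E} \big[(C{+}D)(A{-}B)^2 + (A{+}B)(C{-}D)^2\big] + \frac{1}{4}\sum_{(u,v)\in E} (A{+}B)(C{+}D),
\]
and observe that the middle sum is a sum of nonnegative terms (each of $A,B,C,D$ is a cardinality, hence $\ge 0$), so it only helps us and can be dropped when seeking an upper bound. Thus it suffices to bound $\frac{1}{4}\sum_{(u,v)\in E}(A{+}B)(C{+}D)$ from above by $\frac{1}{4}\binom{n}{2}\left(\frac{n-2}{2}\right)^2$.

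First I would fix an edge $(u,v)\in E$ and bound the single term $(A{+}B)(C{+}D)$. The key structural fact is that the four quantities $A(u,v), B(u,v), C(u,v), D(u,v)$ count the $n-2$ vertices of $V\setminus\{u,v\}$ according to which of the four ``types'' they fall into relative to the edge $(u,v)$; these four classes partition $V\setminus\{u,v\}$, so $A+B+C+D = n-2$ for every edge. Writing $s = (A+B)$ and $t = (C+D)$, we have $s+t = n-2$ with $s,t\ge 0$, and by AM--GM (or just calculus on the parabola $s\mapsto s(n-2-s)$) the product $st$ is maximized when $s=t=\frac{n-2}{2}$, giving $(A+B)(C+D) \le \left(\frac{n-2}{2}\right)^2$.

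Summing this bound over all $|E| = \binom{n}{2}$ edges of the tournament yields
\[
\frac{1}{4}\sum_{(u,v)\in E}(A{+}B)(C{+}D) \;\le\; \frac{1}{4}\binom{n}{2}\left(\frac{n-2}{2}\right)^2,
\]
and combining this with the dropped nonnegative middle sum gives exactly the claimed inequality $c(T,5) \le \frac{3}{4}\binom{n}{5} + \frac{1}{4}\binom{n}{2}\left(\frac{n-2}{2}\right)^2$.

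Honestly, there is no real obstacle here: the only things to be careful about are (i) verifying that $A+B+C+D = n-2$ holds for every edge, which follows immediately from the fact that every $w \in V\setminus\{u,v\}$ has a definite orientation on both $wu$ (or $uw$) and $wv$ (or $vw$) and hence lands in exactly one of the four classes — this is also consistent with adding equations (\ref{odu})--(\ref{idv}), since $od(u)+id(u)+od(v)+id(v) = 2(n-1)$ forces $2 + 2(A+B+C+D) = 2(n-1)$ — and (ii) noting that the bound $st \le \left(\frac{n-2}{2}\right)^2$ need not be attained simultaneously at all edges, so the corollary's bound is not claimed to be tight, merely valid. This is why the statement is phrased as an inequality rather than an exact extremal value.
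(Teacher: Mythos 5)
Your proposal is correct and follows essentially the same route as the paper: drop the nonnegative subtracted sum in Theorem~\ref{exact count}, use $A+B+C+D=n-2$ on each edge to bound $(A+B)(C+D)\le\left(\frac{n-2}{2}\right)^2$, and sum over the $\binom{n}{2}$ edges. Your added remarks verifying the partition identity and noting the bound need not be tight are fine but not needed beyond what the paper states.
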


\begin{proof}
The sum being subtracted in the statement of Theorem~\ref{exact count} is at most zero, so we focus on the quantity
$$ \sum_{(u,v) \in E} (A(u,v)+B(u,v))(C(u,v)+D(u,v)).$$

Recall that $A(u,v)+B(u,v)+C(u,v)+D(u,v) = n{-}2$ for each $(u,v) \in E$, so $(A(u,v)+B(u,v))(C(u,v)+D(u,v))$ is maximized when $A(u,v)+B(u,v) = C(u,v)+D(u,v) = \frac{n-2}{2}$. Therefore 

$$ \sum_{(u,v) \in E} (A(u,v)+B(u,v))(C(u,v)+D(u,v)) \le {n \choose 2} \left( \frac{n-2}{2}\right)^2.$$
\end{proof}

\begin{corollary}

\label{lower bound corollary}
For all $n$-tournaments $T$, 
$$ c(T,5) \geq \frac{3}{4}{n \choose 5} - \frac{1}{2}{n-2 \choose 2}\sum_{w \in V} \left(od(w) - \frac{n-1}{2} \right)^2  
- \frac{3}{8}{n \choose 3}.$$
\end{corollary}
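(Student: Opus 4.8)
The plan is to read this off from the exact identity in Theorem~\ref{exact count}. Since the last term there, $\frac14\sum_{(u,v)\in E}(A{+}B)(C{+}D)$, is nonnegative, we may discard it when seeking a lower bound, so the whole task reduces to bounding the subtracted sum
\[
S \;:=\; \sum_{(u,v)\in E}\big[(C{+}D)(A{-}B)^2 + (A{+}B)(C{-}D)^2\big]
\]
from above in terms of the out-degree sequence; dividing that bound by $8$ and inserting it into Theorem~\ref{exact count} then finishes the proof.

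First I would express $A{-}B$ and $C{-}D$ purely in terms of out-degrees. From (\ref{odv}) and (\ref{idu}), together with $id(u)=n-1-od(u)$, we get $A-B = od(v)-id(u) = od(u)+od(v)-(n-1)$, and from (\ref{odu}) and (\ref{odv}) we get $C-D = od(u)-od(v)-1$. Next, because $A+B+C+D = n-2$ on every edge, both $A{+}B \le n-2$ and $C{+}D \le n-2$, so on each edge
\[
(C{+}D)(A{-}B)^2 + (A{+}B)(C{-}D)^2 \;\le\; (n-2)\big[(A{-}B)^2 + (C{-}D)^2\big],
\]
and therefore $S \le (n-2)\sum_{(u,v)\in E}\big[(A{-}B)^2+(C{-}D)^2\big]$.

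The remaining work is one calculation. Writing $x_w = od(w)-\tfrac{n-1}{2}$, the per-edge summand is $(x_u+x_v)^2 + (x_u-x_v-1)^2 = 2x_u^2 + 2x_v^2 - 2x_u + 2x_v + 1$. I would sum this over the directed edges $(u,v)\in E$ term by term, using $\sum_{(u,v)\in E}g(u) = \sum_w od(w)\,g(w)$ and $\sum_{(u,v)\in E}g(v) = \sum_w id(w)\,g(w)$, and then $od(w)+id(w) = n-1$ and $od(w)-id(w) = 2x_w$. The quadratic part contributes $2(n-1)\sum_w x_w^2$, the linear part $-4\sum_w x_w^2$, and the constant contributes $|E| = {n\choose 2}$, so $\sum_{(u,v)\in E}\big[(A{-}B)^2+(C{-}D)^2\big] = 2(n-3)\sum_w x_w^2 + {n\choose 2}$. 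Hence $S \le 2(n-2)(n-3)\sum_w x_w^2 + (n-2){n\choose 2} = 4{n-2\choose 2}\sum_w x_w^2 + 3{n\choose 3}$, and dividing by $8$ and substituting into Theorem~\ref{exact count} gives a bound of exactly the asserted shape, with main term $\tfrac12{n-2\choose 2}\sum_{w\in V}\big(od(w)-\tfrac{n-1}{2}\big)^2$ subtracted and a lower-order correction of size $\tfrac38{n\choose 3}$.

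I do not anticipate a genuine obstacle: the argument is a short chain of elementary inequalities followed by a double-counting identity. The one delicate point is that edge-summation: a directed edge enters through its tail $u$ and its head $v$ with different multiplicities, and keeping this asymmetry straight (via $od(w)-id(w)=2x_w$) is precisely what produces both the coefficient $\tfrac12{n-2\choose 2}$ and the lower-order $\tfrac38{n\choose 3}$ term. One should also check that the crude bound $A{+}B, C{+}D \le n-2$ loses nothing essential: the quantities it throws away, namely $\sum(A{+}B)(A{-}B)^2$ and $\sum(C{+}D)(C{-}D)^2$ together with the already-dropped $\sum(A{+}B)(C{+}D)$, are all nonnegative and so only strengthen the resulting lower bound.
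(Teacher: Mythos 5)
Your proposal is correct and follows essentially the same route as the paper's own proof: drop the nonnegative term $\frac14\sum(A{+}B)(C{+}D)$, bound $A{+}B,\,C{+}D\le n-2$, rewrite $A{-}B$ and $C{-}D$ in terms of out-degrees, and convert the edge sum to a vertex sum to obtain $S\le 4\binom{n-2}{2}\sum_{w}\bigl(od(w)-\tfrac{n-1}{2}\bigr)^2+3\binom{n}{3}$, exactly as in the paper. One point you should make explicit rather than gloss as ``a correction of size $\tfrac38\binom{n}{3}$'': this derivation (and the paper's own) gives the constant term with a \emph{minus} sign, i.e.\ $c(T,5)\ge \tfrac34\binom{n}{5}-\tfrac12\binom{n-2}{2}\sum_{w}\bigl(od(w)-\tfrac{n-1}{2}\bigr)^2-\tfrac38\binom{n}{3}$, so the $+\tfrac38\binom{n}{3}$ in the corollary as printed is a sign slip that neither argument establishes (and indeed the ``$+$'' version fails for the regular tournament on $5$ vertices, which has only $2$ five-cycles).
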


\begin{proof}
Note that 
\begin{equation}
\label{lower bound sum}
c(T,5)\geq \frac{3}{4}{n \choose 5} - \frac{1}{8}\sum_{(u,v)\in E} [(C{+}D)(A{-}B)^2 + (A{+}B)(C{-}D)^2],
\end{equation}
so we seek an upper bound for the sum on the right hand side of (\ref{lower bound sum}).

From Equations (\ref{odu}), (\ref{idu}), (\ref{odv}), and (\ref{idv}) above, we see that 
\begin{eqnarray*}
	A{-}B &=& od(v) - id(u), \mbox{ and}\\
	C{-}D &=& od(u) - od(v) - 1
\end{eqnarray*}

We also note that $C+D \le n-2$, $A+B \le n-2$, and $id(u) = n-1-od(u)$ for any $u \in V$.
Therefore, 
\begin{equation}
\label{subtracted part}
 \sum_{(u,v)\in E} [(C{+}D)(A{-}B)^2 + (A{+}B)(C{-}D)^2]
\end{equation}
 is bounded above by 
 
\begin{eqnarray*}
&&  (n-2)\sum_{(u,v)\in E} [(od(v) {-} id(u))^2 + (od(u) {-} od(v) {-} 1)^2] \\
&=& (n-2)\sum_{(u,v)\in E}[2od(v)^2  {+} id(u)^2  {+} od(u)^2 - 2od(v)id(u) - 2od(u)od(v)  {+} 1 - 2 od(u)  {+} 2 od(v))]\\
&=& (n-2)\sum_{(u,v)\in E}[2od(v)^2 + id(u)^2 + od(u)^2 - 2(n{-}1) \, od(v) + 1 - 2 od(u) + 2 od(v))]\\
&=& (n-2)\sum_{(u,v)\in E}[2od(v)^2 + (n-1-od(u))^2 + od(u)^2 - 2(n{-}1) \, od(v) + 1 - 2 od(u) + 2 od(v))]\\
&=& (n-2)\sum_{(u,v)\in E}[2od(v)^2  {+} (n{-}1)^2 {-} 2(n{-}1)od(u)  {+} 2od(u)^2 {-} 2(n{-}1) \, od(v)  {+} 1 {-}2 od(u)  {+} 2 od(v))]	\\
&=& (n-2)\sum_{(u,v)\in E} \left[2 \left(od(v){-}\frac{n{-}1}{2}\right)^2 {+} 2 \left(od(u){-}\frac{n{-}1}{2}\right)^2 {+} 1 {-} 2 od(u) {+} 2 od(v))
 \right]	\\
\end{eqnarray*}

We can translate this sum over edges to a sum over vertices. If $f$ is any function, then summing $f(v)$ over all edges $(u,v)$ means that for each time that a vertex $v$ appears as the terminus of a directed edge (which happens $id(v)$ times), it contributes $f(v)$ to the sum. Therefore $\displaystyle \sum_{(u,v)\in E} f(v) = \sum_{v \in V} id(v)f(v)$. 

Summing $f(u)$ over all edges $(u,v)$ means that for each time that a vertex $u$ appears as the origin of a directed edge (which happens $od(u)$ times), it contributes $f(u)$ to the sum. Therefore
$\displaystyle \sum_{(u,v)\in E} f(u) = \sum_{u \in V} od(u)f(u)$. Hence

$$\sum_{(u,v)\in E} 2 \left(od(v){-}\frac{n{-}1}{2}\right)^2
=\sum_{w \in V} 2id(w) \left(od(w) - \frac{n-1}{2} \right)^2,$$

$$\sum_{(u,v)\in E} 2 \left(od(u){-}\frac{n{-}1}{2}\right)^2
= \sum_{w \in V} 2od(w) \left(od(w) - \frac{n-1}{2} \right)^2,$$

$$\sum_{(u,v)\in E} 1 
= \sum_{w \in V} \frac{n-1}{2},$$

$$\sum_{(u,v)\in E} 2 od(u) = \sum_{w \in V} 2 od(w)^2, \text{   and}$$

$$\sum_{(u,v)\in E} 2 od(v) = \sum_{w \in V} 2 id(w) od(w)$$

and the bound above is

\begin{eqnarray*}
&=& (n-2)\sum_{w \in V}[ 2id(w) \left(od(w) - \frac{n-1}{2} \right)^2 + 2od(w) \left(od(w) - \frac{n-1}{2} \right)^2 \\
&& \,\,\,\, \,\,\,\, \,\,\,\, \,\,\,\, \,\,\,\, \,\,\,\,  +\frac{n-1}{2} - 2 od(w)^2 + 2 id(w)od(w))]	\\
&=&    (n-2)\sum_{w \in V}[ 2(n-1) \left(od(w) - \frac{n-1}{2} \right)^2 + \frac{n-1}{2} - 2 od(w)^2 + 2 id(w)od(w))]	\\
&=&    (n-2)\sum_{w \in V}[ 2(n-1) \left(od(w) - \frac{n-1}{2} \right)^2 + \frac{n-1}{2} - 2 od(w)(od(w)-id(w))]	\\
&=&    (n-2)\sum_{w \in V}[ 2(n-1) \left(od(w) - \frac{n-1}{2} \right)^2 + \frac{n-1}{2} - 2 od(w)(2od(w) - (n-1))]	\\
&=&    (n-2)\sum_{w \in V}[ 2(n-1) \left(od(w) - \frac{n-1}{2} \right)^2 + \frac{n-1}{2} - 4 od(w)^2 + 2(n-1)od(w)]	\\
&=&    (n-2)\sum_{w \in V}[ 2(n-3) \left(od(w) - \frac{n-1}{2} \right)^2 + \frac{n-1}{2} + (n-1)^2 - 2(n-1)od(w)] \\
&=&    2(n-2)(n-3)\sum_{w \in V} \left(od(w) - \frac{n-1}{2} \right)^2 + (n-2)\left(n\left(\frac{n-1}{2}\right) + n (n-1)^2 - 2(n-1){n \choose 2}\right) \\
&=&    2(n-2)(n-3)\sum_{w \in V} \left(od(w) - \frac{n-1}{2} \right)^2  + 3 {n \choose 3} 
\end{eqnarray*}
as desired.
\end{proof}

%
%
%
%
%
%
%
%

\section{Generalizations and future directions}
It is unexpected and exciting that $c(n,k)$, the maximum number of directed $k$-cycles in an $n$-vertex tournament, is asymptotically equal to the expected number of these cycles in a random tournament with edge density $p$ when $k{=}3$ and $k{=}5$, but not when $k{=}4$. A natural next direction is to find for which $k$ $c(n,k)$ is asymptotically equal to the expected value, $\displaystyle \frac{(k-1)!}{2^k} {n \choose k}$.

In finding the maximum number of 5-cycles, we made use of the fact that the (exact) number of 5-cycles in any tournament can be written in terms of its edge score sequence (that is, using the values $A(u,v), B(u,v), C(u,v)$, and $D(u,v)$ for each edge $(u,v)$ in the tournament). It is interesting to note that this approach will not work for computing the number of 6-cycles in a tournament, as $c(T,6)$ cannot be written in terms of the edge score sequence. It would be very interesting to find a combinatorial interpretation of the formula for $c(T,5)$ written in terms of the edge score sequence; for instance, 
$$c(T,5) = \frac{3}{4}{n \choose 5} - \frac{1}{8}\sum_{(u,v)\in E} [(C{+}D)(A{-}B)^2 + (A{+}B)(C{-}D)^2] + \frac{1}{4}\sum_{(u,v)\in E} (A{+}B)(C{+}D)$$
as discovered above.

\section{Appendix: Verification of Linear Relations}
\label{section:appendix}

Let $T=(V,E)$ be an arbitrary tournament with $V = \{1, 2, \dots , n\}$.  For each $i \in [12]$ let $V_i$ be the 
set of 5-vertex subsets of $V$ that induce a tournament isomorphic to $T_i$. For each 5-vertex subset $S$ of $V$,
let $P_S$ be the set of permutations of $S$ (i.e.\ bijections from $S$ to $S$).

\vspace{.15truein}

Define indicator functions as follows:

$f(i,j)$ is 1 if $(i,j)$ is an edge, and 0 otherwise.

$A(i,j,k)$ is 1 if both $i$ and $j$ have $k$ as an out-neighbor, and 0 otherwise.

$B(i,j,k)$ is 1 if both $i$ and $j$ have $k$ as an in-neighbor, and 0 otherwise.

$C(i,j,k)$ is 1 if $i$ has $k$ as an out-neighbor and $j$ has $k$ as an in-neighbor, and 0 otherwise.

$D(i,j,k)$ is 1 if $i$ has $k$ as an in-neighbor and $j$ has $k$ as an out-neighbor, and 0 otherwise.

\vspace{.15truein}

Observe that for an edge $(i,j)$, we have 
$A(i,j) = \sum_{k=1}^n A(i,j,k)$, $B(i,j) = \sum_{k=1}^n B(i,j,k)$,
$C(i,j) = \sum_{k=1}^n C(i,j,k)$ and $D(i,j) = \sum_{k=1}^n D(i,j,k)$.
Thus, to verify the first equation,

\vspace{.1truein}

$$\sum_{(i,j) \in E} {A(i,j) \choose 2} C(i,j)
= \frac12 \sum_{(i,j) \in E} A(i,j) A(i,j) C(i,j) - \frac12 \sum_{(i,j) \in E} A(i,j) C(i,j)$$

$$= \frac12 \sum_{i=1}^n \sum_{j=1}^n \sum_{k=1}^n \sum_{l=1}^n \sum_{m=1}^n f(i,j) A(i,j,k) A(i,j,l) C(i,j,m) - 
\frac12 \sum_{i=1}^n \sum_{j=1}^n \sum_{l=1}^n \sum_{m=1}^n f(i,j) A(i,j,l) C(i,j,m)$$

\vspace{.1truein}
\noindent
Notice that the terms in the first nested sum are 0, unless $i$, $j$, $k$, $l$ and $m$ are distinct or
($k=l$ and $i$, $j$, $k$ and $m$ are distinct). Thus the above expression is 

$$= \frac12 \sum_{q=1}^{12} \sum_{s=\{i,j,k,l,m\} \in V_q} \sum_{\pi \in P_s} 
f(\pi(i),\pi(j))\cdot A(\pi(i),\pi(j),\pi(k))\cdot A(\pi(i),\pi(j),\pi(l))\cdot C(\pi(i),\pi(j),\pi(m))$$
$$+\frac12 \sum_{i=1}^n \sum_{j=1}^n \sum_{k=l=1}^n \sum_{m=1}^n f(i,j) A(i,j,k) A(i,j,l) C(i,j,m)
- \frac12 \sum_{i=1}^n \sum_{j=1}^n \sum_{l=1}^n \sum_{m=1}^n f(i,j) A(i,j,l) C(i,j,m)$$

\vspace{.1truein}
\noindent
The nested sums on the second line of the preceding expression cancel, since $A(i,j,l) A(i,j,l)= A(i,j,l)$
for all $i$, $j$ and $l$. Hence the preceding expression is 

$$= \frac12 \sum_{q=1}^{12} \sum_{s=\{i,j,k,l,m\} \in V_q} \sum_{\pi \in P_s} 
f(\pi(i),\pi(j))\cdot A(\pi(i),\pi(j),\pi(k))\cdot A(\pi(i),\pi(j),\pi(l))\cdot C(\pi(i),\pi(j),\pi(m))$$

\vspace{.1truein}
\noindent
Since the sum over $\pi \in P_s$ depends only on the isomorphism class of the tournament induced by $s$, this is

$$= \frac12 \sum_{q=1}^{12} A_q(T) \sum_{\pi \in P_{V(T_q)}} 
f(\pi(1),\pi(2))\cdot A(\pi(1),\pi(2),\pi(3))\cdot A(\pi(1),\pi(2),\pi(4))\cdot C(\pi(1),\pi(2),\pi(5))$$

\vspace{.1truein}
\noindent
where the sum over $\pi \in P_{V(T_q)}$ is calculated for the tournament $T_q$ by labeling its vertices 
with $\{1,2,3,4,5\}$ from top to bottom and left to right. Note that every term in the sum is 0 or 1,
so we will list, for each of the 12 tournaments, all of the non-zero terms. The computer code and source
file which automate this procedure can be found at
\verb|www.math.cmu.edu/~jmackey/tourn.f| and \verb| www.math.cmu.edu/~jmackey/tourn5|.

\vspace{.1truein}
For $T_1$, $f(1,3) A(1,3,4) A(1,3,5) C(1,3,2) = 
            f(1,3) A(1,3,5) A(1,3,4) C(1,3,2) = 1$

\vspace{.1truein}
For $T_4$, $f(1,2) A(1,2,3) A(1,2,5) C(1,2,4) = 
            f(1,2) A(1,2,5) A(1,2,3) C(1,2,4) = $

           $f(1,3) A(1,3,4) A(1,3,5) C(1,3,2) =
            f(1,3) A(1,3,5) A(1,3,4) C(1,3,2) = $

           $f(1,4) A(1,4,2) A(1,4,5) C(1,4,3) = 
            f(1,4) A(1,4,5) A(1,4,2) C(1,4,3) = 1$

\vspace{.1truein}
For $T_6$, $f(1,2) A(1,2,4) A(1,2,5) C(1,2,3) = 
            f(1,2) A(1,2,5) A(1,2,4) C(1,2,3) = $

           $f(1,3) A(1,3,2) A(1,3,4) C(1,3,5) = 
            f(1,3) A(1,3,4) A(1,3,2) C(1,3,5) = 1$

\vspace{.15truein}
\noindent
Hence, $\sum_{(i,j) \in E} {A(i,j) \choose 2} C(i,j) = A_1 + 3 A_4 + 2 A_6$, as desired.

\vspace{.2truein}
To verify the second equation, we have $$\sum_{(i,j) \in E} {A(i,j) \choose 2} D(i,j)$$

$$= \frac12 \sum_{q=1}^{12} A_q(T) \sum_{\pi \in P_{V(T_q)}} 
f(\pi(1),\pi(2))\cdot A(\pi(1),\pi(2),\pi(3))\cdot A(\pi(1),\pi(2),\pi(4))\cdot D(\pi(1),\pi(2),\pi(5))$$

\noindent
and the corresponding non-zero terms are

\vspace{.1truein}
For $T_2$, $f(1,3) A(1,3,4) A(1,3,5) D(1,3,2) = 
            f(1,3) A(1,3,5) A(1,3,4) D(1,3,2) = $

           $f(2,1) A(2,1,4) A(2,1,5) D(2,1,3) =
            f(2,1) A(2,1,5) A(2,1,4) D(2,1,3) = $

           $f(3,2) A(3,2,4) A(3,2,5) D(3,2,1) = 
            f(3,2) A(3,2,5) A(3,2,4) D(3,2,1) = 1$

\vspace{.1truein}
For $T_3$, $f(2,1) A(2,1,3) A(2,1,5) D(2,1,4) = 
            f(2,1) A(2,1,5) A(2,1,3) D(2,1,4) = 1$

\vspace{.1truein}
For $T_7$, $f(1,2) A(1,2,3) A(1,2,4) D(1,2,5) = 
            f(1,2) A(1,2,4) A(1,2,3) D(1,2,5) = 1$

\vspace{.1truein}
For $T_8$, $f(1,2) A(1,2,3) A(1,2,4) D(1,2,5) = 
            f(1,2) A(1,2,4) A(1,2,3) D(1,2,5) = 1$

\vspace{.15truein}
\noindent
Hence, $\sum_{(i,j) \in E} {A(i,j) \choose 2} D(i,j) = 3 A_2 + A_3 + A_7 + A_8$, as desired.

\vspace{.2truein}
To verify the third equation, we have $$\sum_{(i,j) \in E} {B(i,j) \choose 2} C(i,j)$$

$$= \frac12 \sum_{q=1}^{12} A_q(T) \sum_{\pi \in P_{V(T_q)}} 
f(\pi(1),\pi(2))\cdot B(\pi(1),\pi(2),\pi(3))\cdot B(\pi(1),\pi(2),\pi(4))\cdot C(\pi(1),\pi(2),\pi(5))$$

\noindent
and the corresponding non-zero terms are

\vspace{.1truein}
For $T_1$, $f(3,5) B(3,5,1) B(3,5,2) C(3,5,4) = 
            f(3,5) B(3,5,2) B(3,5,1) C(3,5,4) = 1$

\vspace{.1truein}
For $T_3$, $f(3,5) B(3,5,1) B(3,5,2) C(3,5,4) = 
            f(3,5) B(3,5,2) B(3,5,1) C(3,5,4) = $
          
           $f(4,5) B(4,5,1) B(4,5,3) C(4,5,2) = 
            f(4,5) B(4,5,3) B(4,5,1) C(4,5,2) = 1$

\vspace{.1truein}
For $T_4$, $f(2,5) B(2,5,1) B(2,5,4) C(2,5,3) = 
            f(2,5) B(2,5,4) B(2,5,1) C(2,5,3) = $

           $f(3,5) B(3,5,1) B(3,5,2) C(3,5,4) =
            f(3,5) B(3,5,2) B(3,5,1) C(3,5,4) = $

           $f(4,5) B(4,5,1) B(4,5,3) C(4,5,2) = 
            f(4,5) B(4,5,3) B(4,5,1) C(4,5,2) = 1$

\vspace{.15truein}
\noindent
Hence, $\sum_{(i,j) \in E} {B(i,j) \choose 2} C(i,j) = A_1 + 2 A_3 + 3 A_4$, as desired.

\vspace{.2truein}
To verify the fourth equation, we have $$\sum_{(i,j) \in E} {B(i,j) \choose 2} D(i,j)$$

$$= \frac12 \sum_{q=1}^{12} A_q(T) \sum_{\pi \in P_{V(T_q)}} 
f(\pi(1),\pi(2))\cdot B(\pi(1),\pi(2),\pi(3))\cdot B(\pi(1),\pi(2),\pi(4))\cdot D(\pi(1),\pi(2),\pi(5))$$

\noindent
and the corresponding non-zero terms are

\vspace{.1truein}
For $T_5$, $f(3,4) B(3,4,1) B(3,4,2) D(3,4,5) = 
            f(3,4) B(3,4,2) B(3,4,1) D(3,4,5) = $

           $f(4,5) B(4,5,1) B(4,5,2) D(4,5,3) =
            f(4,5) B(4,5,2) B(4,5,1) D(4,5,3) = $

           $f(5,3) B(5,3,1) B(5,3,2) D(5,3,4) = 
            f(5,3) B(5,3,2) B(5,3,1) D(5,3,4) = 1$

\vspace{.1truein}
For $T_6$, $f(4,5) B(4,5,1) B(4,5,2) D(4,5,3) = 
            f(4,5) B(4,5,2) B(4,5,1) D(4,5,3) = 1$

\vspace{.1truein}
For $T_7$, $f(4,5) B(4,5,2) B(4,5,3) D(4,5,1) = 
            f(4,5) B(4,5,3) B(4,5,2) D(4,5,1) = 1$

\vspace{.1truein}
For $T_8$, $f(4,3) B(4,3,1) B(4,3,2) D(4,3,5) = 
            f(4,3) B(4,3,2) B(4,3,1) D(4,3,5) = 1$

\vspace{.15truein}
\noindent
Hence, $\sum_{(i,j) \in E} {B(i,j) \choose 2} D(i,j) = 3 A_5 + A_6 + A_7 + A_8$, as desired.

\vspace{.2truein}
To verify the fifth equation, we have $$\sum_{(i,j) \in E} {C(i,j) \choose 2} A(i,j)$$

$$= \frac12 \sum_{q=1}^{12} A_q(T) \sum_{\pi \in P_{V(T_q)}} 
f(\pi(1),\pi(2))\cdot C(\pi(1),\pi(2),\pi(3))\cdot C(\pi(1),\pi(2),\pi(4))\cdot A(\pi(1),\pi(2),\pi(5))$$

\noindent
and the corresponding non-zero terms are

\vspace{.1truein}
For $T_1$, $f(1,4) C(1,4,2) C(1,4,3) A(1,4,5) = 
            f(1,4) C(1,4,3) C(1,4,2) A(1,4,5) = 1$

\vspace{.1truein}
For $T_5$, $f(1,3) C(1,3,2) C(1,3,5) A(1,3,4) = 
            f(1,3) C(1,3,5) C(1,3,2) A(1,3,4) = $

           $f(1,4) C(1,4,2) C(1,4,3) A(1,4,5) =
            f(1,4) C(1,4,3) C(1,4,2) A(1,4,5) = $

           $f(1,5) C(1,5,2) C(1,5,4) A(1,5,3) = 
            f(1,5) C(1,5,4) C(1,5,2) A(1,5,3) = 1$

\vspace{.1truein}
For $T_6$, $f(1,4) C(1,4,2) C(1,4,3) A(1,4,5) = 
            f(1,4) C(1,4,3) C(1,4,2) A(1,4,5) = $

           $f(1,5) C(1,5,2) C(1,5,4) A(1,5,3) = 
            f(1,5) C(1,5,4) C(1,5,2) A(1,5,3) = 1$

\vspace{.15truein}
\noindent
Hence, $\sum_{(i,j) \in E} {C(i,j) \choose 2} A(i,j) = A_1 + 3 A_5 + 2 A_6$, as desired.

\vspace{.2truein}
To verify the sixth equation, we have $$\sum_{(i,j) \in E} {C(i,j) \choose 2} B(i,j)$$

$$= \frac12 \sum_{q=1}^{12} A_q(T) \sum_{\pi \in P_{V(T_q)}} 
f(\pi(1),\pi(2))\cdot C(\pi(1),\pi(2),\pi(3))\cdot C(\pi(1),\pi(2),\pi(4))\cdot B(\pi(1),\pi(2),\pi(5))$$

\noindent
and the corresponding non-zero terms are

\vspace{.1truein}
For $T_1$, $f(2,5) C(2,5,3) C(2,5,4) B(2,5,1) = 
            f(2,5) C(2,5,4) C(2,5,3) B(2,5,1) = 1$

\vspace{.1truein}
For $T_2$, $f(1,5) C(1,5,3) C(1,5,4) B(1,5,2) = 
            f(1,5) C(1,5,4) C(1,5,3) B(1,5,2) = $

           $f(2,5) C(2,5,1) C(2,5,4) B(2,5,3) =
            f(2,5) C(2,5,4) C(2,5,1) B(2,5,3) = $

           $f(3,5) C(3,5,2) C(3,5,4) B(3,5,1) = 
            f(3,5) C(3,5,4) C(3,5,2) B(3,5,1) = 1$

\vspace{.1truein}
For $T_3$, $f(1,5) C(1,5,3) C(1,5,4) B(1,5,2) = 
            f(1,5) C(1,5,4) C(1,5,3) B(1,5,2) = $

           $f(2,5) C(2,5,1) C(2,5,3) B(2,5,4) = 
            f(2,5) C(2,5,3) C(2,5,1) B(2,5,4) = 1$

\vspace{.15truein}
\noindent
Hence, $\sum_{(i,j) \in E} {C(i,j) \choose 2} B(i,j) = A_1 + 3 A_2 + 2 A_3$, as desired.

\vspace{.2truein}
To verify the seventh equation, we have $$\sum_{(i,j) \in E} {D(i,j) \choose 2} A(i,j)$$

$$= \frac12 \sum_{q=1}^{12} A_q(T) \sum_{\pi \in P_{V(T_q)}} 
f(\pi(1),\pi(2))\cdot D(\pi(1),\pi(2),\pi(3))\cdot D(\pi(1),\pi(2),\pi(4))\cdot A(\pi(1),\pi(2),\pi(5))$$

\noindent
and the corresponding non-zero terms are

\vspace{.1truein}
For $T_3$, $f(4,2) D(4,2,1) D(4,2,3) A(4,2,5) = 
            f(4,2) D(4,2,3) D(4,2,1) A(4,2,5) = 1$

\vspace{.1truein}
For $T_8$, $f(5,1) D(5,1,2) D(5,1,3) A(5,1,4) = 
            f(5,1) D(5,1,3) D(5,1,2) A(5,1,4) = 1$

\vspace{.1truein}
For $T_9$, $f(5,1) D(5,1,3) D(5,1,4) A(5,1,2) = 
            f(5,1) D(5,1,4) D(5,1,3) A(5,1,2) = 1$

\vspace{.1truein}
For $T_10$, $f(1,3) D(1,3,2) D(1,3,5) A(1,3,4) = 
             f(1,3) D(1,3,5) D(1,3,2) A(1,3,4) = 1$

\vspace{.15truein}
\noindent
Hence, $\sum_{(i,j) \in E} {D(i,j) \choose 2} A(i,j) = A_3 + A_8 + A_9 + A_{10}$, as desired.

\vspace{.2truein}
To verify the eighth equation, we have $$\sum_{(i,j) \in E} {D(i,j) \choose 2} B(i,j)$$

$$= \frac12 \sum_{q=1}^{12} A_q(T) \sum_{\pi \in P_{V(T_q)}} 
f(\pi(1),\pi(2))\cdot D(\pi(1),\pi(2),\pi(3))\cdot D(\pi(1),\pi(2),\pi(4))\cdot B(\pi(1),\pi(2),\pi(5))$$

\noindent
and the corresponding non-zero terms are

\vspace{.1truein}
For $T_6$, $f(5,3) D(5,3,2) D(5,3,4) B(5,3,1) = 
            f(5,3) D(5,3,4) D(5,3,2) B(5,3,1) = 1$

\vspace{.1truein}
For $T_8$, $f(3,5) D(3,5,1) D(3,5,4) B(3,5,2) = 
            f(3,5) D(3,5,4) D(3,5,1) B(3,5,2) = 1$

\vspace{.1truein}
For $T_9$, $f(4,5) D(4,5,1) D(4,5,2) B(4,5,3) = 
            f(4,5) D(4,5,2) D(4,5,1) B(4,5,3) = 1$

\vspace{.1truein}
For $T_10$, $f(4,5) D(4,5,1) D(4,5,2) B(4,5,3) = 
             f(4,5) D(4,5,2) D(4,5,1) B(4,5,3) = 1$

\vspace{.15truein}
\noindent
Hence, $\sum_{(i,j) \in E} {D(i,j) \choose 2} B(i,j) = A_6 + A_8 + A_9 + A_{10}$, as desired.

\vspace{.2truein}
To verify the ninth equation, we have $$\sum_{(i,j) \in E} A(i,j) B(i,j) C(i,j)$$

$$= \sum_{q=1}^{12} A_q(T) \sum_{\pi \in P_{V(T_q)}} 
f(\pi(1),\pi(2))\cdot A(\pi(1),\pi(2),\pi(3))\cdot B(\pi(1),\pi(2),\pi(4))\cdot C(\pi(1),\pi(2),\pi(5))$$

\noindent
and the corresponding non-zero terms are

\vspace{.1truein}
For $T_1$, $f(2,4) A(2,4,5) B(2,4,1) C(2,4,3) = 1$

\vspace{.1truein}
For $T_2$, $f(1,4) A(1,4,5) B(1,4,2) C(1,4,3) = 
            f(2,4) A(2,4,5) B(2,4,3) C(2,4,1) = $

           $f(3,4) A(3,4,5) B(3,4,1) C(3,4,2) = 1$

\vspace{.1truein}
For $T_5$, $f(2,3) A(2,3,4) B(2,3,1) C(2,3,5) = 
            f(2,4) A(2,4,5) B(2,4,1) C(2,4,3) = $

           $f(2,5) A(2,5,3) B(2,5,1) C(2,5,4) = 1$

\vspace{.1truein}
For $T_7$, $f(2,4) A(2,4,5) B(2,4,1) C(2,4,3) = 1$

\vspace{.1truein}
For $T_8$, $f(1,4) A(1,4,3) B(1,4,5) C(1,4,2) = 
            f(2,3) A(2,3,5) B(2,3,1) C(2,3,4) = $

           $f(2,4) A(2,4,3) B(2,4,1) C(2,4,5) = 1$

\vspace{.1truein}
For $T_{10}$, $f(3,4) A(3,4,5) B(3,4,1) C(3,4,2) = 1$

\vspace{.15truein}
\noindent
Hence, $\sum_{(i,j) \in E} A(i,j) B(i,j) C(i,j) = A_1 + 3 A_2 + 3 A_5 + A_7 + 3 A_8 + A_{10}$, as desired.

\vspace{.2truein}
To verify the tenth equation, we have $$\sum_{(i,j) \in E} A(i,j) B(i,j) D(i,j)$$

$$= \sum_{q=1}^{12} A_q(T) \sum_{\pi \in P_{V(T_q)}} 
f(\pi(1),\pi(2))\cdot A(\pi(1),\pi(2),\pi(3))\cdot B(\pi(1),\pi(2),\pi(4))\cdot D(\pi(1),\pi(2),\pi(5))$$

\noindent
and the corresponding non-zero terms are

\vspace{.1truein}
For $T_3$, $f(3,4) A(3,4,5) B(3,4,1) D(3,4,2) = 1$

\vspace{.1truein}
For $T_4$, $f(2,3) A(2,3,5) B(2,3,1) D(2,3,4) = 
            f(3,4) A(3,4,5) B(3,4,1) D(3,4,2) = $

           $f(4,2) A(4,2,5) B(4,2,1) D(4,2,3) = 1$

\vspace{.1truein}
For $T_6$, $f(3,2) A(3,2,4) B(3,2,1) D(3,2,5) = 1$

\vspace{.1truein}
For $T_9$, $f(2,3) A(2,3,4) B(2,3,1) D(2,3,5) = 1$

\vspace{.1truein}
For $T_{10}$, $f(2,1) A(2,1,4) B(2,1,5) D(2,1,3) = 
               f(5,2) A(5,2,1) B(5,2,3) D(5,2,4) = 1$

\vspace{.1truein}
For $T_{11}$, $f(2,3) A(2,3,5) B(2,3,1) D(2,3,4) = 
               f(3,4) A(3,4,5) B(3,4,1) D(3,4,2) = $

              $f(4,2) A(4,2,5) B(4,2,1) D(4,2,3) = 1$

\vspace{.1truein}
For $T_{12}$, $f(1,2) A(1,2,4) B(1,2,3) D(1,2,5) = 
               f(2,4) A(2,4,5) B(2,4,1) D(2,4,3) = $

              $f(3,1) A(3,1,2) B(3,1,5) D(3,1,4) = 
               f(4,5) A(4,5,3) B(4,5,2) D(4,5,1) = $

              $f(5,3) A(5,3,1) B(5,3,4) D(5,3,2) = 1$

\vspace{.15truein}
\noindent
Hence, $\sum_{(i,j) \in E} A(i,j) B(i,j) D(i,j) = A_3 + 3 A_4 + A_6 + A_9 + 2 A_{10}
 + 3 A_{11} + 5 A_{12}$, as desired.

\vspace{.2truein}
To verify the eleventh equation, we have $$\sum_{(i,j) \in E} A(i,j) C(i,j) D(i,j)$$

$$= \sum_{q=1}^{12} A_q(T) \sum_{\pi \in P_{V(T_q)}} 
f(\pi(1),\pi(2))\cdot A(\pi(1),\pi(2),\pi(3))\cdot C(\pi(1),\pi(2),\pi(4))\cdot D(\pi(1),\pi(2),\pi(5))$$

\noindent
and the corresponding non-zero terms are

\vspace{.1truein}
For $T_{3}$,  $f(1,4) A(1,4,5) C(1,4,3) D(1,4,2) = 
               f(2,3) A(2,3,5) C(2,3,1) D(2,3,4) = 1$

\vspace{.1truein}
For $T_7$, $f(1,3) A(1,3,4) C(1,3,2) D(1,3,5) = 1$

\vspace{.1truein}
For $T_8$, $f(2,5) A(2,5,4) C(2,5,3) D(2,5,1) = 1$

\vspace{.1truein}
For $T_9$, $f(1,3) A(1,3,4) C(1,3,2) D(1,3,5) = 1$

\vspace{.1truein}
For $T_{10}$, $f(3,2) A(3,2,4) C(3,2,5) D(3,2,1) = 
               f(3,5) A(3,5,2) C(3,5,4) D(3,5,1) = 1$

\vspace{.1truein}
For $T_{11}$, $f(1,2) A(1,2,3) C(1,2,4) D(1,2,5) = 
               f(1,3) A(1,3,4) C(1,3,2) D(1,3,5) = $

              $f(1,4) A(1,4,2) C(1,4,3) D(1,4,5) = 1$

\vspace{.15truein}
\noindent
Hence, $\sum_{(i,j) \in E} A(i,j) C(i,j) D(i,j) = 2 A_3 + A_7 + A_8 + A_9 + 2 A_{10}
 + 3 A_{11}$, as desired.

\vspace{.2truein}
To verify the twelfth equation, we have $$\sum_{(i,j) \in E} B(i,j) C(i,j) D(i,j)$$

$$= \sum_{q=1}^{12} A_q(T) \sum_{\pi \in P_{V(T_q)}} 
f(\pi(1),\pi(2))\cdot B(\pi(1),\pi(2),\pi(3))\cdot C(\pi(1),\pi(2),\pi(4))\cdot D(\pi(1),\pi(2),\pi(5))$$

\noindent
and the corresponding non-zero terms are

\vspace{.1truein}
For $T_{6}$,  $f(2,5) B(2,5,1) C(2,5,4) D(2,5,3) = 
               f(3,4) B(3,4,1) C(3,4,2) D(3,4,5) = 1$

\vspace{.1truein}
For $T_7$, $f(3,5) B(3,5,2) C(3,5,4) D(3,5,1) = 1$

\vspace{.1truein}
For $T_8$, $f(5,4) B(5,4,2) C(5,4,1) D(5,4,3) = 1$

\vspace{.1truein}
For $T_9$, $f(2,4) B(2,4,1) C(2,4,3) D(2,4,5) = 1$

\vspace{.1truein}
For $T_{10}$, $f(1,4) B(1,4,2) C(1,4,3) D(1,4,5) = 
               f(2,4) B(2,4,3) C(2,4,1) D(2,4,5) = 1$

\vspace{.1truein}
For $T_{11}$, $f(2,5) B(2,5,4) C(2,5,3) D(2,5,1) = 
               f(3,5) B(3,5,2) C(3,5,4) D(3,5,1) = $

              $f(4,5) B(4,5,3) C(4,5,2) D(4,5,1) = 1$

\vspace{.15truein}
\noindent
Hence, $\sum_{(i,j) \in E} B(i,j) C(i,j) D(i,j) = 2 A_6 + A_7 + A_8 + A_9 + 2 A_{10}
 + 3 A_{11}$, as desired.

\bibliographystyle{plain}
\bibliography{tournamentsbib}

\end{document}